\documentclass[review]{elsarticle}[13pt]

\usepackage{lineno,hyperref}
\modulolinenumbers[5]

\usepackage[margin=0.7in]{geometry}
\usepackage{amsmath,amsthm,verbatim,amssymb,amsfonts,amscd, graphicx, float}
\usepackage{enumitem, scalefnt}
\newtheorem{theorem}{Theorem}[section]
\newtheorem{corollary}{Corollary}[theorem]
\newtheorem{lemma}[theorem]{Lemma}
\newtheorem*{remark}{Remark}

\usepackage{bm}

\newcommand\Ix[0]{I_{x}}
\newcommand\Iy[0]{I_{y}}
\newcommand\Iz[0]{I_{z}}
\newcommand\Ixinv[0]{\Ix^{-1}}
\newcommand\Iyinv[0]{\Iy^{-1}}
\newcommand\Izinv[0]{\Iz^{-1}}
\newcommand\vl[0]{\bm{l}}
\newcommand\vL[0]{\bm{L}}
\newcommand\vn[0]{\bm{n}}

\newcommand\vlr[0]{\bm{l^r}}
\newcommand\vLr[0]{\bm{L^r}}
\newcommand\vnr[0]{\bm{n^r}}

\newcommand\vlb[0]{\bm{l^b}}
\newcommand\vLb[0]{\bm{L^b}}

\newcommand\vi[0]{\bm{i}}
\newcommand\vj[0]{\bm{j}}
\newcommand\vk[0]{\bm{k}}

\newcommand\veo[0]{{\bm{e_1}}}
\newcommand\vet[0]{{\bm{e_2}}}
\newcommand\vEo[0]{{\bm{\varepsilon_1}}}
\newcommand\vEt[0]{{\bm{\varepsilon_2}}}

\newcommand\lxb[0]{{\vlb_x}}
\newcommand\lyb[0]{{\vlb_y}}
\newcommand\lzb[0]{{\vlb_z}}

\newcommand\Lxb[0]{{\vLb_x}}
\newcommand\Lyb[0]{{\vLb_y}}
\newcommand\Lzb[0]{{\vLb_z}}

\newcommand\cosa[0]{\cos\alpha}
\newcommand\sina[0]{\sin\alpha}
\newcommand\cosb[0]{\cos\beta}
\newcommand\sinb[0]{\sin\beta}

\newcommand\psit[0]{{\psi_t}}
\newcommand\thetat[0]{{\theta_t}}
\newcommand\phit[0]{{\varphi_t}}

\newcommand\thetao[0]{{\theta_0}}
\newcommand\phio[0]{{\varphi_0}}
\newcommand\psio[0]{{\psi_0}}

\newcommand\cpsi[0]{\cos\psit}

\newcommand\ctheta[0]{\cos\thetat}
\newcommand\stheta[0]{\sin\thetat}
\newcommand\cphi[0]{\cos\phit}
\newcommand\sphi[0]{\sin\phit}

\newcommand{\norm}[1]{||#1||}
\newcommand{\inner}[2]{<#1, #2>}
\newcommand\modtpi[0]{\text{ mod } 2\pi}
\newcommand\modone[0]{\text{ mod } 1}
\newcommand{\normL}[0]{\norm{\vL}}
\newcommand{\E}[0]{\mathbb{E}}


\bibliographystyle{elsarticle-num}

\begin{document}

\begin{frontmatter}

\title{Dynamics and Probability in the Toss of a Coin \\with Symmetric Inhomogeneous Density}

\fntext[shilunfootnote]{Department of Mathematics, UC Berkeley, Berkeley, CA, 94709, USA. Email: \url{shilun@berkeley.edu}. Research supported by University of California, Berkeley under Berkeley Fellowship.}
\author{Shilun Li\thanks{}}




\begin{abstract}
Under investigation in this paper is the dynamics and probability of heads in the toss of a coin with symmetric inhomogeneous density. Such coins are assumed to have diagonal inertia matrix. The rotational motion of the coin is determined by the initial angular momentum and initial position of the coin. We described the dynamic behavior of the unit normal vector and calculated the limiting probability of heads as time goes to infinity with respect to the fixed initial parameters. Our probability formula extends the formula for homogeneous coins by Keller and Diaconis et al.
\end{abstract}

\begin{keyword}
coin toss, rigid body, limiting probability, dynamic equations
\end{keyword}

\end{frontmatter}


\section{Introduction}
The motion of a coin toss can be modeled with a dynamical system governed by mechanics laws, determined entirely on the initial configuration. The outcomes can be random due to the variations in the initial parameters. Several physical mechanisms for randomness in coin toss have been reported, see \cite{mahadevan2011probability}. 

Keller considered a specific uniform coin with initial velocity and angular velocity imparted at the instant of tossing \cite{keller1986probability}. The uniform coin has inertia matrix given by $\text{diag}(\Ix, \Iy, \Iz)$ with $\Ix=\Iy<\Iz$, spins without air resistance and lands without bouncing. Assuming that the coin rotates about a horizontal axis that lies along a diameter of the coin, Keller proved that the limiting probability of heads is $50\%$. Building upon Keller's work, Diaconis et al found dynamical bias in the toss of a uniform coin which depends on the angle $\theta$ between initial the angular momentum $\vL$ and normal of heads $\vn$ \cite{diaconis2007dynamical}. The probability of heads if 50\% if and only if $\theta=\frac{\pi}{2}$. If a coin starts out heads, it ends up heads more often. Diaconis et al also measured empirical distributions of $\theta$ from real coin flip experiments and estimated that the probability of heads is $50.83\%$ given the coin starts out heads.

While Keller and Diaconis et al neglects air resistance and bouncing of the coin, Vulovi{\'c} and Prange\cite{vulovic1986randomness} analysed the effect of bouncing on the probability. They found that bouncing adds randomness to the toss which results in an increase in fairness. Yue and Zhang\cite{zeng1985sensitive} takes into account both bouncing and air resistance. The non-linearity of air resistance and bouncing causes acute sensitivity to initial conditions, adding randomness to the coin toss. On the other hands, Lindley\cite{lindley1981coin} followed by Gelman et al\cite{gelman2002you} considered non-uniform coins with mass inhomogeneously distributed. They gave informal arguments without rigorous proofs suggesting that the inhomogeneity of the coin will not affect the probability if the coin is caught in hand.

In this paper, we will investigate the dynamical bias of coins with symmetric inhomogeneous density, which is also referred to as non-uniform coins. Non-uniform coins are coins with inertia matrix given by $\text{diag}(\Ix, \Iy, \Iz)$ where $\Ix<\Iy<\Iz$. We will neglect the influence of air resistance and bouncing, assuming that the coin rotates freely in the air.

\section{Preliminaries} 
\label{sec::2}
We will first introduce three coordinate systems centered at the centroid of the coin with orthonormal basis: 
\begin{itemize}
    \item Reference frame $\{\vi, \vj, \vk\}$ where $-\vk$ is the direction of gravity and $\vi,\vj$ independent of time.
    \item Body fixed frame $\{\veo, \vet, \vn\}$ where $\vn$ is the normal to the heads of the coin.
    \item Intermediate frame $\{\vEo,\vEt,\vl\}$ where $\vEo=\frac{\vk-\inner{\vk}{\vl}\vl}{\norm{\vk-\inner{\vk}{\vl}\vl}}$, $\vEt=\vn\times\vEo$, as shown in Figure \ref{coordinate}.
\end{itemize}
\begin{figure}[H]
\begin{center}
\includegraphics[width=5cm]{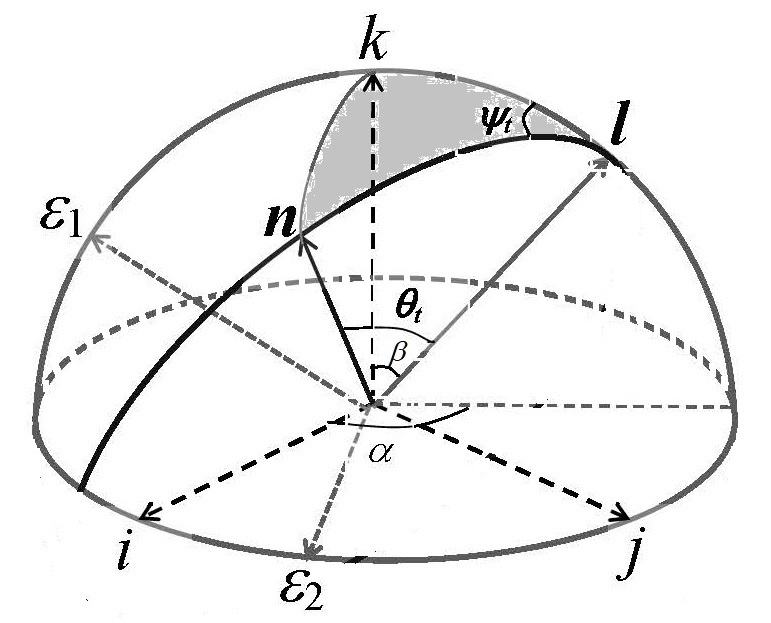}
\hspace{18pt}
\includegraphics[width=5cm]{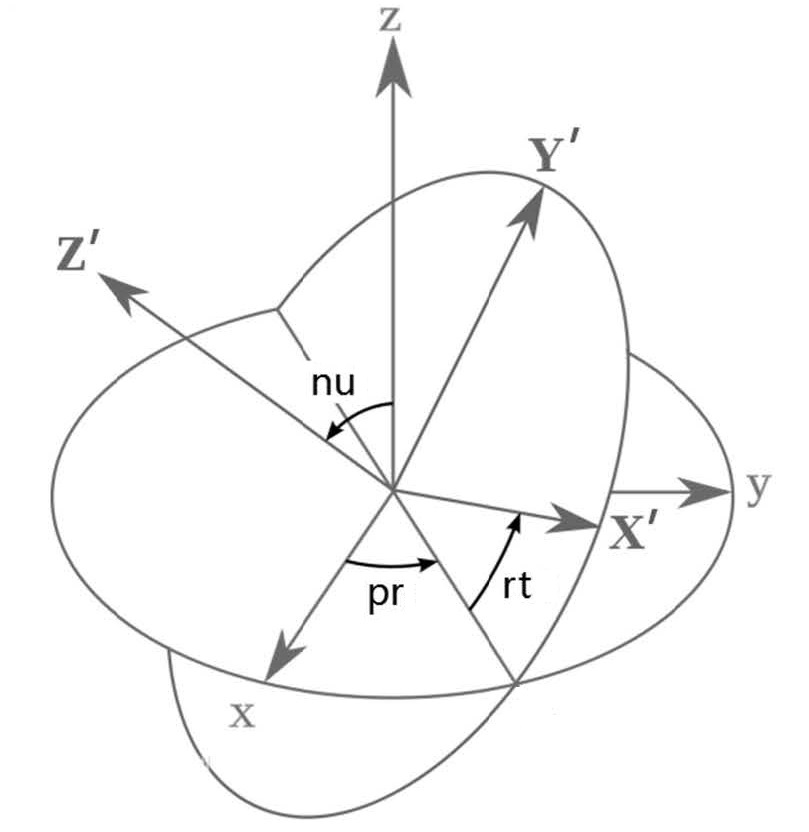}
\end{center}
\vspace*{8pt}
\caption{The basis set $\{\vEo,\vEt,\vl\}$ and the angles $\psit, \thetat,\alpha, \beta$ (left) and Euler angles (right).}
\label{coordinate}
\end{figure}

We use superscript r refer to the coordinates of vectors in the reference frame, b for the body fixed frame, and no superscript for basis independent situations. The intermediate frame is only introduced for calculating the rotational matrix between the body fixed frame and the reference frame and its existence will be suppressed in section \ref{sec::normal vec}. 

Angular momentum theorem applies in the reference frame \cite{murray1993probability,strzalko2008dynamics}. It tells us the angular momentum $\vLr$ is conserved in the reference frame since the coin is torque free if we ignore air resistance. Then the coordinates of $\vLr$ and $\vlr=\frac{\vLr}{\norm{\vLr}}$ are time independent in the reference frame. But the coordinates in body frame are time dependent. Using spherical coordinates, we can write 
\begin{equation}
\vlr = (\cosa\sinb, \sina\sinb, \cosb), 
\label{lr}
\end{equation}
and
\begin{equation}
\vlb=(\cphi\stheta, \sphi\stheta, \ctheta),
\label{lb}
\end{equation}
where $\alpha$, $\beta$ and $\thetat$ are shown in Figure \ref{coordinate}.

Any orthonormal basis can be rotated to another orthonormal basis by a sequence of three Euler angles \cite{goldstein2011classical}, precession pr, nutation nu, and rotation rt, as shown in the right panel of Figure \ref{coordinate}. The rotation matrix (acting by left multiplication) in terms of Euler angles is
\begin{equation}
    A=\begin{pmatrix}
    C_{pr}C_{rt}-S_{pr}C_{nu}S_{rt} & -C_{pr}S_{rt}-S_{pr}C_{nu}C_{rt} & S_{pr} S_{nu}\\
    S_{pr}C_{rt}+C_{pr}C_{nu}S_{rt} & -S_{pr}S_{rt}+C_{pr}C_{nu}C_{rt} & -C_{pr}S_{nu}\\
    S_{nu}S_{rt} & S_{nu}C_{rt} & C_{nu}\\
    \end{pmatrix},
    \label{Amatrix}
\end{equation}
where $S$ and $C$ denote the trigonometric functions $\sin$ and $\cos$, e.g. $S_{nu},\ C_{nu}$ denote $\sin(nu),\ \cos(nu)$, respectively.

Intermediate frame acts as a bridge between the body frame  and the reference frame. Let $A_1$ be the rotation matrix described by Euler angles $\{pr_1, nu_1, rt_1\}$ from $\{\vi,\vj,\vk\}$ to $\{\vEo,\vEt,\vl\}$, $A_2$ be the rotation matrix described by Euler angles $\{pr_2, nu_2, rt_2\}$ from $\{\vEo,\vEt,\vl\}$ to $\{\veo,\vet,\vn\}$. We have,
\begin{equation}
    (\vi,\vj,\vk)\xrightarrow[]{(pr_1, nu_1, rt_1)} (\vEo,\vEt,\vl) \xrightarrow[]{(pr_2, nu_2, rt_2)} (\veo,\vet,\vn)
\end{equation}
From the definition of the basis $\{\vEo,\vEt,\vl\}$,
 we have $\vk = (\sinb)\vEo + (\cosb)\vl$. So the coordinates of $\vk$ in the system $\{\vEo,\vEt,\vl\}$ and $\{\vi,\vj,\vk\}$ are $\big(\sinb,0,\cosb\big)$ and $(0,0,1)$, respectively. In addition, the coordinates of $\vl$ in the system $\{\vi,\vj,\vk\}$ and $\{\vEo,\vEt,\vl\}$ are $\big(\cosa\sinb,\sina\sinb,\cosb\big)$ and $(0,0,1)$, respectively. Then the rotation matrix $A_1$ satisfies
\begin{equation}
\begin{cases}
\left(\sinb,0,\cosb\right)^T = A_1^T (0,0,1)^T,\\
\left(\cosa\sinb,\sina\sinb,\cosb\right)^T = A_1 (0,0,1)^T.
\end{cases}
\end{equation}
The components of $A_1$ have form (\ref{Amatrix}). Those equations imply $(pr_1,\ nu_1, \ rt_1)=(\alpha + \frac{\pi}{2},\beta, \frac{\pi}{2})$ or $(\alpha - \frac{\pi}{2},-\beta,  -\frac{\pi}{2})$. Then we obtain
\begin{equation}
     \label{A1 matrix}
      A_1 = 
     \begin{pmatrix}
     -C_\alpha C_\beta & S_\alpha & C_\alpha S_\beta\\
     -S_\alpha C_\beta & -C_\alpha & S_\alpha S_\beta\\
     S_\beta & 0 & C_\beta\\
  \end{pmatrix}
\end{equation}

Let $\psit$ be the dihedral angle $\vk-\vl-\vn$, the dynamic angle of the plane spanned by $\vn(t)$ and $\vl$ rotating around the plane spanned by $\vk$ and $\vl$, as shown in Figure \ref{coordinate}. Then $\psit \modtpi$ is also the longitude of $\vn$ in the intermediate frame. Similarly, we have $(pr_2,\ nu_2, \ rt_2)= ( \psit + \frac{\pi}{2},\thetat, \frac{\pi}{2} - \phit)$ or $( \psit - \frac{\pi}{2},-\thetat, -\frac{\pi}{2} - \phit)$. Therefore
\begin{equation}
     \label{A2 matrix}
      A_2 = 
     \begin{pmatrix}
     -S_\psit S_\phit-C_\psit C_\thetat C_\phit &
     S_\psit C_\phit - C_\psit C_\thetat S_\phit &
     C_\psit S_\thetat
     \\
     C_\psit S_\phit - S_\psit C_\thetat C_\phit&
     -C_\psit C_\phit - S_\psit C_\thetat S_\phit&
     S_\psit S_\thetat
     \\
     S_\thetat C_\phit&
     S_\thetat S_\phit&
     C_\thetat
  \end{pmatrix}
\end{equation}

\section{The evolution of normal vector \texorpdfstring{$\vnr$}{TEXT}}
\label{sec::normal vec}
\subsection{Dynamic equations of angular momentum \texorpdfstring{$\vlb$}{TEXT}}
The coin rotates freely not subject to any net forces or torques around the fixed centroid. This is a classical Euler-Poinsot problem. The dynamic equations are given in Landau\cite{landau1969mechanics} by
\begin{equation}
\begin{cases}
    \frac{d}{dt}\Lxb = (\Izinv - \Iyinv)\Lxb \Lzb,\\
    \frac{d}{dt}\Lyb = (\Ixinv - \Izinv)\Lzb \Lxb,\\
    \frac{d}{dt}\Lzb = (\Iyinv - \Ixinv)\Lxb \Lyb.
\end{cases}
\label{dynamic_equation_L}
\end{equation}
Or in terms of Euler angles, 
\begin{equation}
\begin{cases}
\frac{d\psit}{dt}= \normL\left(\frac{\cos^2\phit}{\Ix}+\frac{\sin^2\phit}{\Iy}\right),\\
\frac{d\phit}{dt}=\normL \cos \thetat\left(\frac{\cos^2\phit}{\Ix}+\frac{\sin^2\phit}{\Iy}-\frac{1}{\Iz}\right),\\
\frac{d\thetat}{dt}=\frac{\normL}{2}\left(\Ixinv-\Iyinv \right)\stheta\sin(2\phit).\\
\end{cases}
\label{dynamic_euqation_angles}
\end{equation}
Note that for non-uniform coins, there is no explicit analytical solution for $\vLb(t)$. The rotational kinetic energy of the coin is given by
\begin{equation}
    E
    =\frac{1}{2}\bigg(\frac{(\Lxb)^2}{\Ix}+\frac{(\Lyb)^2}{\Iy}+\frac{(\Lzb)^2}{\Iz}\bigg)
    = \frac{\normL^2}{2}\bigg(\frac{(\lxb)^2}{\Ix}+\frac{(\lyb)^2}{\Iy}+\frac{(\lzb)^2}{\Iz}\bigg)
    \label{energy_sphere}
\end{equation}
which is constant with respect to $t$. Therefore, $\vlb$ must lie on the fixed ellipsoid $\frac{x^2}{\Ix}+\frac{y^2}{\Iy}+\frac{z^2}{\Iz}=\frac{2E}{\normL}$ and the sphere $x^2+y^2+z^2=1$ in the body fixed frame for all $t$. The intersection is a closed curve as shown in Figure \ref{angular momentum traj}. So $\vlb$ is periodic. In the special case of uniform coins, the angular velocity or angular momentum rotates and traces out a circle in the body-fixed frame.
\begin{figure}[H]
\begin{center}
\includegraphics[width=5cm]{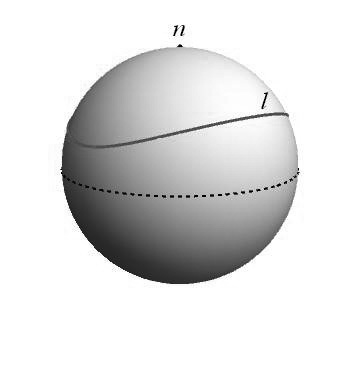}
\hspace{6pt}
\includegraphics[width=5cm]{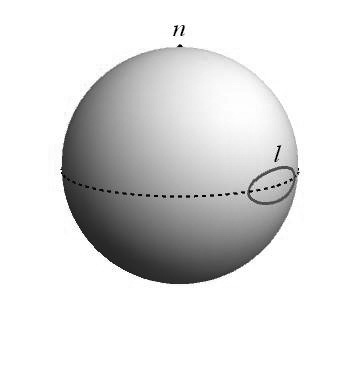}
\end{center}
\vspace*{6pt}
\caption{Two possible paths of $\vlb$ for non-uniform coins.}
\label{angular momentum traj}
\end{figure}

\subsection{The normal \texorpdfstring{$\vnr$}{TEXT}} 
Based on the evolution of $\vLb$ in the body frame, and the motion of $\vLb$ relative to normal vector $\vn$, we can further derive the evolution of $\vnr$ in reference frame.

\begin{theorem}
\label{normal evolution}
Given an initial angular momentum
$$\vLr = \normL\left(\cosa\sinb, \sina\sinb, \cosb \right).$$
Then at time $t$, the unit normal vector
\begin{align*}
    \vnr=&
    \begin{pmatrix}
    -C_{\alpha}C_\beta C_\psit S_\thetat+S_{\alpha} S_\psit S_\thetat + C_{\alpha}S_\beta C_\thetat\\
    -S_{\alpha}C_\beta C_\psit S_\thetat
    -C_{\alpha} S_\psit S_\thetat
    +S_{\alpha} S_{\beta} C_\thetat\\
    S_{\beta} C_\psit S_\thetat
    +C_{\beta} C_\thetat\\
    \end{pmatrix},
\end{align*}
where $(\phit, \thetat, \psit)$ are determined by equations (\ref{dynamic_euqation_angles}). 
\end{theorem}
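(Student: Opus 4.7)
The plan is to exploit the composition of rotations set up in Section~\ref{sec::2}: the intermediate frame $\{\vEo,\vEt,\vl\}$ decouples the geometry of the initial data (encoded by $A_1$ in terms of $\alpha,\beta$) from the time-dependent body dynamics (encoded by $A_2$ in terms of the Euler angles $\psit,\thetat,\phit$). Since $A_1$ converts intermediate-frame coordinates into reference-frame coordinates, and $A_2$ converts body-frame coordinates into intermediate-frame coordinates, any vector with body-frame column $v^b$, intermediate-frame column $v^E$, and reference-frame column $v^r$ satisfies $v^E = A_2\,v^b$ and $v^r = A_1\,v^E$, hence $v^r = A_1 A_2\, v^b$.

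Next, I would specialize this identity to the normal $\vn$. By definition of the body-fixed frame, $\vnb = (0,0,1)^T$, so
\begin{equation*}
    \vnr \;=\; A_1\, A_2\,(0,0,1)^T.
\end{equation*}
The product $A_2\,(0,0,1)^T$ is simply the third column of the matrix in (\ref{A2 matrix}), which gives the intermediate-frame representation $\vn^E = (C_\psit S_\thetat,\; S_\psit S_\thetat,\; C_\thetat)^T$.

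Finally, I would multiply this vector by the explicit $A_1$ from (\ref{A1 matrix}), row by row. Each of the three resulting components matches one of the entries stated in the theorem, so the claim follows. The whole argument is a direct matrix product, so I do not anticipate a real obstacle; the only point demanding care is to verify the direction of each rotation (intermediate-to-reference for $A_1$, body-to-intermediate for $A_2$), which is already pinned down by the identities derived around (\ref{A1 matrix}) and (\ref{A2 matrix}), for example $(\cosa\sinb,\sina\sinb,\cosb)^T = A_1(0,0,1)^T$. The dynamic equations (\ref{dynamic_euqation_angles}) are not invoked at this step; they only describe how $\psit,\thetat,\phit$ evolve in time and are simply inherited by the final formula through $A_2$.
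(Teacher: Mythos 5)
Your proposal is correct and follows essentially the same route as the paper, which simply observes that $\vnr = A_1 A_2 (0,0,1)^T$ and reads off the entries from (\ref{A1 matrix}) and (\ref{A2 matrix}). You spell out the chain of frame changes and the identification of $A_2(0,0,1)^T$ as the third column of $A_2$ a bit more explicitly, but the argument is the same matrix product.
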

\begin{proof}
The theorem directly follows from equations (\ref{A1 matrix}), (\ref{A2 matrix}) and $\vnr = A_1A_2(0,0,1)^T$.
\end{proof}
In Figure \ref{normal traj}, the coin is heads up when $\vnr$ is at the north hemisphere and tails is up otherwise. The figures show that $\vn$ precesses around the angular momentum $\vl$. For uniform coins, $\vn$ spin around $\vl$ in a circle, and the angle $\thetat$ between $\vl$ and $\vn$ stay constant. For non-uniform coins, $\vn$ spin with nutation around $\vl$ in a ring between 2 parallel circles.
\begin{figure}[H]
\begin{center}
\includegraphics[width=5cm]{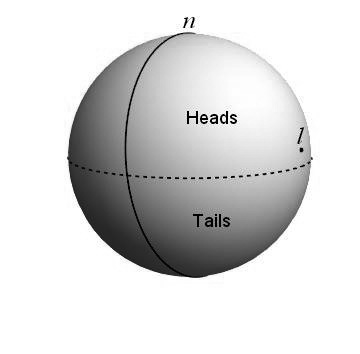}
\hspace{6pt}
\includegraphics[width=5cm]{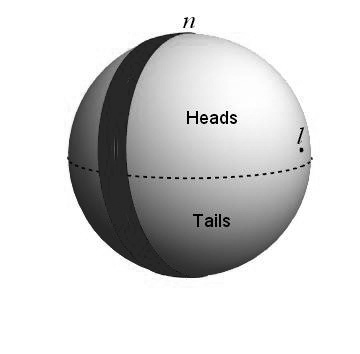}
\end{center}
\vspace*{6pt}
\caption{The path of $\vnr$ for uniform (left) and non-uniform (right) coin.}
\label{normal traj}
\end{figure}
From Theorem \ref{normal evolution}, we obtain the criterion for the coin landing heads up:
\begin{corollary} $\vnr(t)$ satisfies
\begin{equation}
    \vnr(t)\cdot\vk = \cosb\ctheta + \sinb\stheta\cpsi,
\label{criteria_eqation}
\end{equation}
and the coin is head up at time $t$ if and only if 
$$\sinb  \stheta\cpsi>-\cosb \ctheta.$$
\label{col::criteria}
\end{corollary}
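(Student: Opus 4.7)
The plan is to read off the dot product $\vnr(t)\cdot\vk$ directly from Theorem \ref{normal evolution} and then translate the geometric condition "heads up" into the corresponding sign inequality. Since $\vk$ has coordinates $(0,0,1)$ in the reference frame, the inner product $\vnr(t)\cdot\vk$ equals simply the third component of the column vector for $\vnr$ displayed in Theorem \ref{normal evolution}. That third entry is $S_\beta C_\psit S_\thetat + C_\beta C_\thetat$, which in the paper's trigonometric shorthand is exactly $\cosb\ctheta+\sinb\stheta\cpsi$, yielding equation (\ref{criteria_eqation}) with no computation.

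For the second assertion, I would appeal to the definition adopted just after Theorem \ref{normal evolution}: the coin is heads up precisely when $\vnr$ lies in the northern hemisphere, i.e.\ when its vertical component is strictly positive. Applying this criterion to the formula just obtained gives
\begin{equation*}
\cosb\,\ctheta + \sinb\,\stheta\,\cpsi > 0,
\end{equation*}
which rearranges to the claimed inequality $\sinb\,\stheta\,\cpsi > -\cosb\,\ctheta$. I would briefly note that the boundary case (equality) corresponds to $\vnr$ lying on the equator, a measure-zero event in the dynamics that can be absorbed into either convention.

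Honestly, there is no serious obstacle here: the corollary is a one-line consequence of Theorem \ref{normal evolution} together with the "north hemisphere $=$ heads" convention. If anything, the only point worth flagging is a notational sanity check, namely confirming that the $A_1, A_2$ matrices in (\ref{A1 matrix}) and (\ref{A2 matrix}) are applied in the correct order so that the third row of $A_1 A_2$ (which gives the $\vk$-component) indeed produces the displayed expression, rather than some transpose of it. Once that bookkeeping is verified, the corollary follows immediately.
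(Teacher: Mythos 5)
Your proof is correct and takes the same route the paper implicitly does: read off the third component of $\vnr$ from Theorem \ref{normal evolution} (since $\vk=(0,0,1)$ in the reference frame) and then impose $\vnr\cdot\vk>0$ as the heads-up criterion. The paper merely remarks that (\ref{criteria_eqation}) is the spherical law of cosines for the triangle with vertices $\vn,\vk,\vl$, which is an equivalent geometric reading of the same identity; your direct computation from the theorem is the honest underlying derivation.
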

Equation (\ref{criteria_eqation}) is just the law of cosines for the spherical triangle, the shaded part in the left panel in Figure \ref{coordinate}, which is formed by the endpoints of  unit vectors $\vn,\vk$ and $\vl$, 

\begin{remark}{\textbf{(property on precession $\psit$)}}
\label{remark::precession}
For the uniform coins with $\Ix=\Iy$, $\vn$ precesses around $\vl$ at a constant speed $\normL\Ixinv$. However, for non-uniform coins, the $\vn$ precesses around $\vl$ at speed varying from $\normL\Iyinv$ to $\normL\Ixinv$.
\end{remark}

\begin{remark}{\textbf{(property on nutation $\thetat$)}}
\label{remark::nutation}
For uniform coins, $\thetat$ is constant. For non-uniform coins $\thetat$ varies periodically from $\theta_m$ to $\theta_M$, which is given by:
\begin{align} 
    \theta_m&=\arccos(\sqrt{c_2}),\ \theta_M=\pi-\arccos(\sqrt{c_2}), &\text{ if } c_1<0,\\
    \theta_m&=\arccos(\sqrt{c_2}),\ \theta_M=\arccos(\sqrt{c_1}), &\text{ if } c_2\geq 0 \text{ and } \thetao\in[0,\frac{\pi}{2}],\\
    \theta_m&=\pi-\arccos(\sqrt{c_1}),\ \theta_M=\pi-\arccos(\sqrt{c_2}), &\text{ if }c_1\geq 0\text{ and } \thetao\in(\frac{\pi}{2},\pi].
\end{align}
with $c_1,c_2$ given by (\ref{c1}), (\ref{c2}).
\end{remark}
\begin{proof}
Since $\norm{\vlb}=1$ and $\vlb$ satisfies equation (\ref{energy_sphere}). Adopting Lagrange method, the extremums of $\lzb$, locating on $\lxb=0$ or $\lyb=0$, are given by
\begin{equation}
    c_1=\cos^2{\theta_0}-\frac{\Ixinv-\Iyinv}{\Iyinv-\Izinv}\cos^2{\phio}\sin^2{\thetao}
    \label{c1}
\end{equation}
\begin{equation}
    c_2=\cos^2{\thetao}+\frac{\Ixinv-\Iyinv}{\Ixinv-\Izinv}\sin^2{\phio}\sin^2{\thetao}
    \label{c2}
\end{equation}
Since $\lzb=\ctheta$, For the uniform case, $c_1=c_2$ so $\thetat$ is constant. For the non-uniform case, we have the desired result by considering the three situations: $c_1<0$, $c_2\geq 0$ and $\thetao\in[0,\frac{\pi}{2}]$, $c_1\geq 0$ and $\thetao\in(\frac{\pi}{2},\pi]$.
\end{proof}
$\theta_m$ and $\theta_M$ are are independent of $\normL$. They are determined by $\Ix, \Iy$ and $\Iz$. For non-uniform coins, $\theta_m$ and $\theta_M$ are either supplementary, both acute, or both obtuse. When the initial $\thetao$ is close enough to $\frac{\pi}{2}$, contained in 
\begin{equation}
S_F=\{(\phio, \thetao)|\cot^2\thetao
<\frac{\Ixinv-\Iyinv}{\Iyinv-\Izinv}\cos^2{\phio}\}, 
\label{SF}
\end{equation}
then $\theta_m$ and $\theta_M$ are supplementary. Under this condition, the two boundary circles perpendicular to $\vl$ corresponding to $\theta_m$ and $\theta_M$ in Figure \ref{normal traj} are centered symmetrically around the spherical center. Let us denote the fair region as the set of initial parameters $(\phio, \thetao)$ such that the proportions of "heads" zone and "tails" zone of $\vnr$ are equal (to 50\%). $S_F$ in (\ref{SF}) is the fair region for non-uniform coins. On the other hand, for uniform coins, the proportion of "heads" zone is 50\% if and only if $\thetao=\frac{\pi}{2}$. The fair region is shown in Figure \ref{fair region}.
\begin{figure}[H]
\begin{center}
\includegraphics[width=7cm]{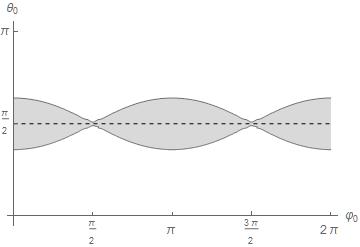}
\end{center}
\caption{Fair region of initial parameter $(\phio, \thetao)$\\
Dotted line: Uniform coin\   \   $ \ $Gray region: non-uniform coin}
\label{fair region}
\end{figure}
The probability of heads (which we will formulate in section \ref{sec::probability}) is approximately the proportion of the "heads" zone of $\vnr$. So we can assume the coin is fair when the initial parameters $(\phio,\thetao)$ is inside the fair region.

\section{Probability of Heads}
\label{sec::probability}
As the orientation of the coin is determined by $\normL$, $(\phio,\thetao)$ and $t$, we will define the probability of heads $p$ as the limiting probability of $\vnr_z>0$ as $t\rightarrow\infty$ given $(\phio,\thetao)$ and a distribution on $\normL$. When $(\phio, \thetao)$ is in the fair region, referred to as the fair case, we assume $p$ is 50\%. So let us now consider the situation where $(\phio, \thetao)$ is outside the fair region. Notice that in this situation, either $\thetat<\frac{\pi}{2}$ for all $t$ as show in the left of Figure \ref{angular momentum traj}, which we refer to as the acute case, or $\thetat>\frac{\pi}{2}$ for all $t$, which we refer to as the obtuse case. The key to obtaining $p$ is to obtain the limiting joint distribution of $(\psit,\phit)$. However, it is suffice to obtain the limiting distribution of $(\psit\modtpi,\phit\modtpi)$. This will rely on the following lemma about limit distributions. 
\begin{lemma}
\label{lem::limit lemma}
If $X$ is a random variable with characteristic function vanishing at infinity. Let $g_1,...,g_n:\mathbb{R}\rightarrow\mathbb{R}$ be real valued continuous function such that $\lim_{t\rightarrow \infty}|\sum_{i=1}^n m_i g_i(t)|=\infty$ for any $(m_1,..,m_n)\in \mathbb{Z}^n\setminus \{0\}$. Then $(g_1(t)X\mod 1,...,g_n(t)X\mod 1)$ converges in distribution to $\mathcal{U}[0,1]^n$ as $t\rightarrow\infty$. 
\end{lemma}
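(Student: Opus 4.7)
My plan is to attack this via Fourier analysis on the torus $\mathbb{T}^n = (\mathbb{R}/\mathbb{Z})^n$, using the standard fact that convergence in distribution on $\mathbb{T}^n$ is equivalent to convergence of every nontrivial Fourier coefficient to zero (a.k.a.\ Weyl's criterion in probabilistic form, which follows from the Stone--Weierstrass theorem since trigonometric polynomials are dense in $C(\mathbb{T}^n)$). Writing $Y_t = (g_1(t)X, \ldots, g_n(t)X) \bmod 1$, it suffices to show
\[
  \mathbb{E}\bigl[\exp\bigl(2\pi i \langle m, Y_t\rangle\bigr)\bigr] \longrightarrow 0 \quad \text{as } t\to\infty
\]
for every $m = (m_1,\ldots,m_n) \in \mathbb{Z}^n \setminus \{0\}$, since the corresponding Fourier coefficient of the uniform distribution is $0$.

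Next I would observe that reducing $\bmod~1$ is immaterial inside the exponential, so
\[
  \mathbb{E}\bigl[\exp\bigl(2\pi i \langle m, Y_t\rangle\bigr)\bigr] = \mathbb{E}\Bigl[\exp\Bigl(2\pi i \Bigl(\sum_{j=1}^n m_j g_j(t)\Bigr) X\Bigr)\Bigr] = \varphi_X\!\left(2\pi \sum_{j=1}^n m_j g_j(t)\right),
\]
where $\varphi_X$ denotes the characteristic function of $X$. By hypothesis, $\bigl|\sum_j m_j g_j(t)\bigr| \to \infty$ as $t\to\infty$, and $\varphi_X$ vanishes at infinity, so the right-hand side tends to $0$. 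This gives the vanishing of every nontrivial Fourier coefficient, hence convergence in distribution to $\mathcal{U}[0,1]^n$.

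The main (minor) subtlety I anticipate is justifying the Fourier-criterion step itself: one needs that on the compact group $\mathbb{T}^n$, weak convergence of probability measures is equivalent to convergence of characteristic functions $\hat{\mu}(m)$ for every $m\in\mathbb{Z}^n$. This is a standard consequence of the Portmanteau theorem together with density of trigonometric polynomials in $C(\mathbb{T}^n)$, and the limit measure is identified as uniform because uniform is the unique probability measure on $\mathbb{T}^n$ whose Fourier coefficients at every nonzero $m$ vanish. Everything else is a one-line hypothesis check; there is no real combinatorial or analytic obstacle beyond writing down this Fourier computation cleanly.
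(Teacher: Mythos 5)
Your proof is correct and follows essentially the same route as the paper's: compute the torus Fourier coefficients of $(g_1(t)X,\ldots,g_n(t)X)\bmod 1$, observe they equal $\varphi_X\bigl(2\pi\sum_j m_j g_j(t)\bigr)$, and conclude weak convergence to the uniform distribution from the vanishing of all nontrivial Fourier coefficients. The paper does the same computation (unfolding the $\bmod\ 1$ reduction into unit boxes and citing Billingsley for the Fourier-coefficient convergence criterion, stated for $n=2$ with the general case "following similarly"), while you phrase it for general $n$ and cite the Weyl/Stone--Weierstrass justification instead; the substance is identical.
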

\begin{proof}

Let us consider the case where $n=2$, cases where $n>2$ follows similarly. Let $Y_1=(g_1(t)X\modone)$ and $Y_2=(g_2(t)X\modone)$. The cumulative distribution function and characteristic function of $X$ are denoted by $F(x)$ and $\Phi_X$, respectively. The characteristic function  $\Phi_Y$ of $(Y_1,Y_2)$  is determined by $\Phi_X$. Consider $\Phi_Y(2\pi m_1,2\pi m_2)$ for any $(m_1,m_2)\in \mathbb{Z}^2$, which are called Fourier coefficients in Engel \cite{engel1992road}, we have  
\begin{align*}
  &\Phi_Y(2\pi m_1,2\pi m_2)\\
  =&\E\left[\exp\{i(2\pi m_1Y_1+2\pi m_2Y_2\}\right]\\
=&\int_{-\infty}^\infty \exp\{2\pi i( m_1Y_1+ m_2Y_2)\} dF(x)\\
=&\sum_{k_1,k_2\in\mathbb{Z}}
\int_{I_{k_1,k_2}} \exp\{2\pi i( m_1Y_1+ m_2Y_2)\} dF(x) \\
=&\sum_{k_1,k_2\in\mathbb{Z}}
\int_{I_{k_1,k_2}} \exp\{2\pi i[ m_1(g_1(t)X-k_1)+ m_2(g_2(t)X-k_2)]\} dF(x) \\
=&\sum_{k_1,k_2\in\mathbb{Z}}
\int_{I_{k_1,k_2}} \exp\{2\pi i(m_1g_1(t)X+ m_2g_2(t)X)\} dF(x) \\
=&\int_{-\infty}^\infty \exp\{2\pi i( m_1g_1(t)X+ m_2g_2(t)X)\} dF(x)\\
=&\Phi_X[m_1g_1(t)+m_2g_2(t)]
\end{align*}
where $I_{k_1,k_2}=[k_1,k_1+1)\times [k_2,k_2+1)$. When $t\rightarrow\infty$, $m_1g_1(t)+m_2g_2(t)\rightarrow\infty$ and $\Phi_X[m_1g_1(t)+m_2g_2(t)]\rightarrow 0$. So 
\begin{equation}
    \lim_{t\rightarrow\infty} \Phi_Y(2\pi m_1,2\pi m_2) = 0
\end{equation}
for all $(m_1,m_2)\in\mathbb{Z}^2\setminus\{(0,0)\}$. Since $Y$ is supported by $[0,1]\times[0,1]$, and the Fourier coefficients of $\mathcal{U}[0,1]^2$ are zero, according to page 361 of Billingsley\cite{billingsley1968probability},
\begin{equation}
    Y \xrightarrow[]{\mathcal{D}} \mathcal{U}[0,1]^2
\end{equation}
which completes the proof.
\end{proof}

\begin{lemma}
\label{lem::limit distribution phi psi}
Suppose $\thetat<\frac{\pi}{2}$ for all $t$ or $\thetat>\frac{\pi}{2}$ for all $t$. For all Schwartz densities of $\normL$, all initial parameters $(\phio,\thetao,\Ix,\Iy,\Iz)$ excluding a measure 0 set, when $t\rightarrow \infty$, we have $(\psit\modtpi,\phit\modtpi)\xrightarrow[]{\mathcal{D}} \mathcal{U}[0,2\pi]^2$ in distribution.
\label{joint uniform distribution}
\end{lemma}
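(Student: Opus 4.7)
The plan is to rescale time so that $\normL$ appears as a random linear prefactor, decompose $\phit,\psit$ into secular-plus-periodic parts, and apply Lemma~\ref{lem::limit lemma} to the secular components. Setting $\tau=\normL\, t$, every right-hand side of (\ref{dynamic_euqation_angles}) is proportional to $\normL$, so in the variable $\tau$ the equations depend only on $(\phi,\theta,\Ix,\Iy,\Iz)$. The body-frame orbit of $\vlb$ (Section~\ref{sec::normal vec}) is a closed curve, so $(\phi(\tau),\theta(\tau))$ is periodic with period $T_0=T_0(\phio,\thetao,\Ix,\Iy,\Iz)$. Under the hypothesis that $\thetat$ stays in a single hemisphere, $\cos\theta$ keeps a fixed sign, hence $\dot\phi$ does too and $\phi$ winds exactly once around the $z$-axis per period, giving $\phi(\tau+T_0)=\phi(\tau)\pm 2\pi$; monotonicity of $\dot\psi$ similarly gives $\psi(\tau+T_0)=\psi(\tau)+\Delta_\psi$ for a constant $\Delta_\psi$ depending only on the initial parameters.

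Writing $\phi(\tau)=\Omega_\phi\tau+\tilde\phi(\tau)$ and $\psi(\tau)=\Omega_\psi\tau+\tilde\psi(\tau)$ with $\Omega_\phi=\pm 2\pi/T_0$, $\Omega_\psi=\Delta_\psi/T_0$ and $T_0$-periodic $\tilde\phi,\tilde\psi$, the original-time trajectories become $\phit=\Omega_\phi\normL\,t+\tilde\phi(\normL\,t)$ and $\psit=\Omega_\psi\normL\,t+\tilde\psi(\normL\,t)$. I then invoke Lemma~\ref{lem::limit lemma} with $X=\normL$, $g_1(t)=\Omega_\phi t/(2\pi)$ and $g_2(t)=\Omega_\psi t/(2\pi)$; its hypothesis reduces to $\Omega_\psi/\Omega_\phi\notin\mathbb{Q}$. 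Since $\Delta_\psi$ depends smoothly and nontrivially on $(\phio,\thetao,\Ix,\Iy,\Iz)$, the rational locus is a countable union of smooth hypersurfaces and hence has Lebesgue measure zero, accounting for the excluded null set. A Schwartz density has characteristic function vanishing at infinity, so Lemma~\ref{lem::limit lemma} yields
\begin{equation*}
(\Omega_\phi\normL\,t\modtpi,\ \Omega_\psi\normL\,t\modtpi)\xrightarrow{\mathcal D}\mathcal U[0,2\pi]^2.
\end{equation*}

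To pass from the secular parts to $(\phit\modtpi,\psit\modtpi)$, I compute the joint characteristic function directly. Fourier-expanding the bounded periodic factor,
\begin{equation*}
e^{i(m_1\tilde\phi(\tau)+m_2\tilde\psi(\tau))}=\sum_{k\in\mathbb{Z}}c_k^{(m_1,m_2)}\,e^{ik\Omega_\phi\tau},
\end{equation*}
gives
\begin{equation*}
\E\big[e^{i(m_1\phit+m_2\psit)}\big]=\sum_{k}c_k^{(m_1,m_2)}\,\Phi_{\normL}\!\big(((m_1+k)\Omega_\phi+m_2\Omega_\psi)\,t\big).
\end{equation*}
Every non-resonant term, one with $(m_1+k)\Omega_\phi+m_2\Omega_\psi\neq 0$, vanishes as $t\to\infty$ by the Schwartz hypothesis.

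The hard step -- and the main obstacle I anticipate -- is controlling the resonant contribution. For irrational $\Omega_\psi/\Omega_\phi$, resonance forces $m_2=0$ and $k=-m_1$, so the obstruction collapses to the single Fourier coefficient $c_{-m_1}^{(m_1,0)}=T_0^{-1}\int_0^{T_0}e^{im_1\phi(\tau)}\,d\tau$. After the change of variable $\tau\mapsto\phi$ via $d\tau=d\phi/\dot\phi(\phi)$, this becomes (up to a constant) the Fourier coefficient of $1/\dot\phi(\phi)$ as a $2\pi$-periodic function of $\phi$, and $\dot\phi$ is explicit from the conservation laws as $\dot\phi=\cos\theta(\phi)\,(A(\phi)-1/\Iz)$ with $A(\phi)=\cos^2\phi/\Ix+\sin^2\phi/\Iy$. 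The $\pi$-symmetry $A(\phi+\pi)=A(\phi)$ and the induced symmetry $\theta(\phi+\pi)=\theta(\phi)$ immediately kill the odd-frequency coefficients; showing that the even-frequency coefficients also vanish for initial parameters outside a further measure-zero exceptional set -- using conservation-law identities to produce the requisite cancellation -- is where I expect most of the real work to lie.
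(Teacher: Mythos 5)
Your time-rescaling and secular/periodic decomposition is a sharper version of what the paper actually does. The paper writes $\phit=\phio+\normL h(t)$, $\psit=\psio+\normL g(t)$ with $h,g$ given by~(\ref{h})--(\ref{g}) and then applies Lemma~\ref{lem::limit lemma} with $X=\normL$; but $h(t)$ and $g(t)$ depend on $\normL$ through $\theta_\tau$ and $\varphi_\tau$, so they are not the deterministic functions $g_i$ that Lemma~\ref{lem::limit lemma} requires, and the application is not licensed. Your Fourier expansion of the bounded periodic corrections $\tilde\phi,\tilde\psi$ makes explicit exactly the term the paper silently discards, and you have correctly reduced the whole issue to whether $c_{-m_1}^{(m_1,0)}=T_0^{-1}\int_0^{T_0}e^{im_1\phi(\tau)}\,d\tau$ vanishes for all $m_1\neq 0$.

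The difficulty is that it does not, and no conservation-law identity will rescue it. After the change of variable $\tau\mapsto\phi$, that coefficient is (up to a nonzero constant) the $m_1$-th Fourier coefficient of $1/\dot\phi(\phi)$ on $[0,2\pi]$, where in rescaled time $\dot\phi=\pm\sqrt{(A(\phi)-\Izinv)(A(\phi)-\Izinv-a_0)}$ and $A(\phi)=\cos^2\phi/\Ix+\sin^2\phi/\Iy=c_0+c_1\cos 2\phi$ with $c_0=\tfrac12(\Ixinv+\Iyinv)$, $c_1=\tfrac12(\Ixinv-\Iyinv)>0$. This is a strictly positive, even, $\pi$-periodic, nonconstant function of $\phi$. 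Its odd-frequency coefficients vanish by $\pi$-periodicity (as you observed), but the coefficient of $\cos 2\phi$ reduces after substitution to
\begin{equation*}
\frac{2}{\pi}\int_{-1}^{1}\frac{v\,dv}{\sqrt{1-v^2}\,\sqrt{(\alpha+c_1 v)(\beta+c_1 v)}},\qquad \alpha=c_0-\Izinv,\quad \beta=c_0-\Izinv-a_0,
\end{equation*}
which is strictly negative whenever $c_1>0$ (split at $v=0$: for $v>0$, $(\alpha+c_1v)(\beta+c_1v)-(\alpha-c_1v)(\beta-c_1v)=2c_1v(\alpha+\beta)>0$, so the negative half dominates). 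Hence the resonant term survives for every non-uniform coin, and the marginal of $\phit\modtpi$ converges not to $\mathcal U[0,2\pi]$ but to the time-average law of $\phi(\tau)\modtpi$, with density proportional to $1/|\dot\phi(\phi)|$. This is not a measure-zero exceptional set of parameters -- it fails identically once $\Ix<\Iy$; the lemma as stated, and the paper's proof of it, are correct only in the degenerate case $\Ix=\Iy$ where $\dot\phi$ is constant. The non-uniform limit of $\phit$ then propagates into Lemma~\ref{lem::limit distribution theta}: the limiting density of $\csc^2\thetat$ acquires an extra factor $1/\sqrt{x(x-1)}$ beyond the arcsine kernel. In short, your proof strategy is sound and your diagnosis of the hard step is exactly right; the obstruction you flagged is a genuine counterexample to the statement, not a missing lemma.
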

\begin{proof}
Recall that $(\lxb,\lyb,\lzb)$ lies on a closed curve and is periodic with some period $T$ and $(\phit, \psit)$ is its spherical coordinates. So $(\phit, \psit)$ has period $T$. Let us denote $c_1$ as the maximum of $\lzb$ on the curve and $c_2$ as the minimum of $\lzb$ on the curve. Define
\begin{equation}
\label{h}
    h(t):=\int_{0}^t \cos\theta_\tau
    \left(\frac{\cos^2\varphi_\tau}{\Ix}+\frac{\sin^2\varphi_\tau}{\Iy}-\frac{1}{\Iz}\right)d\tau,
\end{equation}
and
\begin{equation}
\label{g}
    g(t):=\int_0^t \left(\frac{\cos^2\varphi_\tau}{\Ix}+\frac{\sin^2\varphi_\tau}{\Iy}\right)d\tau.
\end{equation}

From (\ref{dynamic_euqation_angles}), we have
\begin{equation}
    \phit = \phio + \normL h(t),\quad
    \psit = \psio + \normL g(t)
\end{equation}
Now since $\normL$ is Schwartz, the characteristic function vanishes at infinity. By Lemma \ref{lem::limit lemma}, it is suffice for us to show
\begin{equation}
    \lim_{t\rightarrow\infty}|m_1 h(t)+m_2 g(t)|=\infty,\quad \forall m_1,m_2\in\frac{1}{2\pi}\mathbb{Z}^2\setminus\{0\}
\end{equation}
which is equivalent to the condition
\begin{equation}
    m_1 h(T)+m_2 g(T)\neq 0,\quad \forall m_1,m_2\in\frac{1}{2\pi}\mathbb{Z}^2\setminus\{0\}
    \label{condition not 0}
\end{equation}
since $h'(t)$ and $g'(t)$ is periodic with period $T$. By (\ref{energy_sphere}), we get the equality:
\begin{equation}
      \frac{\sin^2\thetat}{\sin^2\thetao} 
      = \frac{\frac{2E}{\normL^2}-\Izinv}{\Ixinv-\Izinv-(\Ixinv-\Iyinv)\sin^2\phit}\\
    = \frac{\Ixinv-\Izinv-(\Ixinv-\Iyinv)\sin^2\phio}{\Ixinv-\Izinv-(\Ixinv-\Iyinv)\sin^2\phit}.
    \label{sin theta_t}
\end{equation} 
Using equation (\ref{sin theta_t}), we can rewrite $m_1h'(t)+m_2g'(t)$ as
\begin{equation}
    m_1h'(t)+m_2g'(t) = \frac{a_0 (m_1\ctheta+m_2)}{\sin^2\thetat}+\frac{m_2}{\Iz}=\frac{a_0 (m_1\lzb+m_2)}{1-(\lzb)^2}+\frac{m_2}{\Iz}
\end{equation}
where $a_0=\sin^2\thetao\left[\Ixinv-\Izinv-(\Ixinv-\Iyinv)\sin^2\phio\right]$. To make a change of variable, we need $\frac{dz(t)}{dt}$. From (\ref{dynamic_euqation_angles}) and the relation
\begin{equation}
    (\lxb)^2=\frac{a_0-\Iyinv+(\Iyinv-\Izinv)(\lzb)^2}{\Ixinv-\Iyinv},\quad (\lyb)^2=\frac{-a_0+\Ixinv-(\Ixinv-\Izinv)(\lzb)^2}{\Ixinv-\Iyinv}
\end{equation}
given by (\ref{energy_sphere}), we have
\begin{equation}
    \frac{d\lzb}{dt}=\pm\normL\sqrt{A(\lzb)^4+B(\lzb)^2+C}
\end{equation}
for 
\begin{align}
    A=& -(\Ixinv-\Izinv)(\Iyinv-\Izinv),\\
    B=& \frac{2\Iz-\Ix-\Iy}{\Ix\Iy\Iz}-a_0(\Ixinv+\Iyinv-2\Izinv),\\
    C=& -(a_0-\Ixinv)(a_0-\Iyinv),
\end{align}
where $+$ is taken when $\lzb$ is going from $c_1$ to $c_2$ and $-$ is taken when $\lzb$ is going from $c_2$ to $c_1$. So with a change of variables from $t$ to $\lzb$, we can finally express the condition in (\ref{condition not 0}) as
\begin{equation}
    m_1h(T)+m_2g(T)=\frac{2}{\normL}\int_{c_1}^{c_2}\frac{\frac{a_0 (m_1z+m_2)}{1-z^2}+\frac{m_2}{\Iz}}{\sqrt{Az^4+Bz^2+C}}dz\neq 0,\quad\forall (m_1,m_2)\in\frac{1}{2\pi}\mathbb{Z}^2\setminus\{0\}
\end{equation}
It is very rare that one of the countable the integral equations 
\begin{equation}
    \int_{c_1}^{c_2}\frac{\frac{a_0 (m_1z+m_2)}{1-z^2}+\frac{m_2}{\Iz}}{\sqrt{Az^4+Bz^2+C}}dz = 0
    \label{integral equation}
\end{equation}
has a solution. We will assume that the set of $(\phio,\thetao,\Ix,\Iy,\Iz)$ such that (\ref{integral equation}) has a solution for some $(m_1,m_2)\in\frac{1}{2\pi}\mathbb{Z}^2\setminus\{0\}$ has Lebesgue measure 0 in $\mathbb{R}^5$. The proof will be left as an open problem. So for all $(\phio,\thetao,\Ix,\Iy,\Iz)$ excluding a measure 0 set, the condition to Lemma \ref{lem::limit lemma} is satisfied. According to the lemma, 
\begin{equation}
    (\psit\modtpi,\phit\modtpi)\xrightarrow[]{\mathcal{D}} \mathcal{U}[0,2\pi]^2
\end{equation}
as $t\rightarrow\infty$ which completes the proof.
\end{proof}

Let us assume from now on that the parameters $(\phio,\thetao,\Ix,\Iy,\Iz)$ are not in the measure 0 set of Lemma \ref{lem::limit distribution theta}. Now it remains for us to find the distribution of $\thetat$ before we can calculate the probability of heads. The following lemma obtains the distribution of $\thetat$ via its relation with $\phit$:
\begin{lemma}
\label{lem::limit distribution theta}
Suppose $\thetat<\frac{\pi}{2}$ for all $t$ or $\thetat>\frac{\pi}{2}$ for all $t$. $\thetat$ is a function of $\cos (2\phit)$ independent of $(\psit\modtpi)$. When $t\rightarrow\infty$, we have $\csc^2\thetat\xrightarrow[]{\mathcal{D}}\text{Arcsine}(a, b)$ for some $a,b$ where $\text{Arcsine}(a, b)$ denotes the Arcsine distribution on $(a,b)$. Moreover, the limiting pdf of $\thetat$ as $t\rightarrow\infty$ is given by
\begin{equation}
    f_\theta(y)=\frac{2|\cot y|}{\pi\sqrt{|(1-\csc^2\theta_m\sin^2 y)(\csc^2\theta_M\sin^2 y-1)|}}.
\end{equation}
\end{lemma}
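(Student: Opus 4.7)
The plan is to reduce the limit distribution of $\thetat$ to that of $\phit$, which we already have from Lemma \ref{lem::limit distribution phi psi}, by exploiting the algebraic relation (\ref{sin theta_t}). First I would rewrite (\ref{sin theta_t}) using $\sin^2\phit = \tfrac{1-\cos(2\phit)}{2}$ to obtain the affine expression
\[
\csc^2\thetat \;=\; A + B\cos(2\phit),
\]
with $A = \tfrac{1}{a_0}\bigl(\tfrac{\Ixinv+\Iyinv}{2}-\Izinv\bigr)$ and $B = \tfrac{\Ixinv-\Iyinv}{2a_0}$, where $a_0$ is the constant from the proof of Lemma \ref{lem::limit distribution phi psi}. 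This gives the stated dependence of $\thetat$ on $\cos(2\phit)$ alone. Since $\cos(2\phit)$ is determined by $\phit\modtpi$, and Lemma \ref{lem::limit distribution phi psi} provides joint convergence of $(\psit\modtpi,\phit\modtpi)$ to $\mathcal{U}[0,2\pi]^2$, the continuous mapping theorem yields joint convergence of $(\psit\modtpi,\thetat)$ to a product distribution, so the limits of $\thetat$ and $\psit\modtpi$ are independent.

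Next I would identify the limit law of $\csc^2\thetat$. The marginal of Lemma \ref{lem::limit distribution phi psi} gives $\phit\modtpi \xrightarrow{\mathcal{D}} \mathcal{U}[0,2\pi]$, and because the map $v\mapsto 2v\modtpi$ preserves that uniform law, the continuous mapping theorem gives $\cos(2\phit)\xrightarrow{\mathcal{D}} \cos V$ with $V\sim\mathcal{U}[0,2\pi]$. A direct cdf computation shows $\cos V$ has the $\text{Arcsine}(-1,1)$ density $\tfrac{1}{\pi\sqrt{1-x^2}}$. Applying the affine rescaling above,
\[
\csc^2\thetat \xrightarrow{\mathcal{D}} \text{Arcsine}\bigl(A-|B|,\,A+|B|\bigr).
\]
Because $\thetat$ stays entirely on one side of $\tfrac{\pi}{2}$, the map $y\mapsto\csc^2 y$ is monotonic on $[\theta_m,\theta_M]$ and sends that interval bijectively onto the support of the limit; therefore $\{A-|B|,\,A+|B|\}=\{\csc^2\theta_M,\,\csc^2\theta_m\}$ with $\theta_m,\theta_M$ as in Remark \ref{remark::nutation}.

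Finally, to obtain $f_\theta$, I would push the arcsine density through the change of variables $x=\csc^2 y$, using $|dx/dy|=2|\cot y|\csc^2 y$, to get
\[
f_\theta(y) = \frac{2|\cot y|\csc^2 y}{\pi\sqrt{(\csc^2 y-a)(b-\csc^2 y)}},
\]
then rewrite each factor under the radical via $\csc^2 y - \csc^2\theta_M = \tfrac{\sin^2\theta_M-\sin^2 y}{\sin^2 y\,\sin^2\theta_M}$ and the analogous identity for $\theta_m$. The leftover $\csc^2 y$ in the numerator cancels against the $\sin^{-2}y$ pulled from under the radical, and rearranging produces exactly the displayed formula, with the absolute value inside the square root in the statement absorbing the sign differences between the acute ($\thetat<\tfrac{\pi}{2}$) and obtuse ($\thetat>\tfrac{\pi}{2}$) cases.

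The main obstacle I anticipate is not algebraic but bookkeeping: making sure that in each regime the two factors under the square root have the signs that make their product nonnegative on the support, so that the absolute value in the stated density is the correct envelope; and verifying that the endpoints $A\pm|B|$ identified from the affine map really coincide with the $\csc^2\theta_m$ and $\csc^2\theta_M$ of Remark \ref{remark::nutation}, which requires matching the constants against the extremal values $c_1,c_2$ computed there.
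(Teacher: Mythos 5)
Your proposal follows the same route as the paper: both use equation (\ref{sin theta_t}) to express $\csc^2\thetat$ as an affine function of $\cos(2\phit)$, both invoke Lemma \ref{lem::limit distribution phi psi} to get the uniform limit of $\phit\modtpi$ (and hence the $\text{Arcsine}(-1,1)$ limit of $\cos(2\phit)$), both then identify the support endpoints with $\csc^2\theta_m$ and $\csc^2\theta_M$ and push through the change of variables $x=\csc^2 y$ to obtain $f_\theta$. Your write-up supplies some details the paper leaves implicit --- the explicit coefficients $A,B$, the continuous-mapping argument for independence of the limits of $\thetat$ and $\psit\modtpi$, and the algebraic simplification that cancels the $\csc^2 y$ Jacobian factor against the $\sin^{-2}y$ emerging from the radical --- but these are elaborations rather than a different method.
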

\begin{proof}
By (\ref{sin theta_t}) we get
\begin{equation}
    \sin^2\thetat =\frac{\sin^2\thetao\left[\Ixinv-\Izinv-(\Ixinv-\Iyinv)\sin^2\phio\right]}{\Ixinv-\Izinv-(\Ixinv-\Iyinv)\sin^2\phit}
    =\frac{1}{k_1\cos(2\phit)+k_2}
\end{equation}
for some $k_1,k_2$ determined by $\Ix, \Iy, \Iz$,$\phio$ and $\thetao$. 
By Lemma \ref{lem::limit distribution phi psi}, we know that $\phit\xrightarrow[]{\mathcal{D}} \mathcal{U}[0,2\pi]$ as $t\rightarrow\infty$. So we have
\begin{equation}
    \csc^2\thetat\xrightarrow[]{\mathcal{D}} \text{Arcsin}(k_2-k_1, k_2+k_1)
\end{equation}
as $t\rightarrow\infty$. Comparing the expression of $k_1,k_2$ with the formula of $\theta_m, \theta_M$, we conclude that in the acute case:
\begin{equation}
    \csc^2\thetat\xrightarrow[]{\mathcal{D}}\text{Arcsine}(\csc^2\theta_M,\csc^2\theta_m),
\end{equation}
and in the obtuse case:
\begin{equation}
    \csc^2\thetat\xrightarrow[]{\mathcal{D}}\text{Arcsine}(\csc^2\theta_m,\csc^2\theta_M),
\end{equation}
as $t\rightarrow\infty$. So in both acute and obtuse cases, the limiting pdf of $\csc^2\thetat$ is given by:
\begin{equation}
    f(x)=\frac{1}{\pi\sqrt{|(x-\csc^2\theta_m)(\csc^2\theta_M-x)|}}.
\end{equation}
So the limiting pdf of $\thetat$ is given by:
\begin{equation}
\label{pdf of thetat}
    f_\theta(y)=\frac{2|\cot y|}{\pi\sqrt{|(1-\csc^2\theta_M\sin^2 y)(\csc^2\theta_m\sin^2 y-1)|}}.
\end{equation}
which completes the proof.

\end{proof}
Now with the result of Lemma \ref{lem::limit distribution phi psi} and Lemma \ref{lem::limit distribution theta}, we can start calculating the probability of heads.
\begin{theorem}
\label{thm::probability heads}
For all Schwartz densities of $\normL$, the limiting
probability of heads as $t\rightarrow\infty$ with $(\phio, \thetao)$ fixed, is given by
\begin{equation}
    \label{probability formula}
    p(\beta,\phio,\thetao)=\frac{1}{2}+\frac{1}{\pi^2}\int_{\theta_m(\phio,\thetao)}^{\theta_M(\phio,\thetao)}\arcsin(\min\{1, \cot\beta\cot y\})f_\theta(y) dy,
\end{equation}
with $f_\theta(y)$ is given in Lemma \ref{lem::limit distribution theta}. In the special case when $\Ix=\Iy<\Iz$(uniform coins), 
\begin{equation}
\label{probability formula uniform}
    p(\beta,\phio,\thetao)=p(\beta,\thetao)=\frac{1}{2}+\frac{1}{\pi}\arcsin(\min\{1, \cot\beta\cot \thetao\}).
\end{equation}

\end{theorem}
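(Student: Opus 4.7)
My strategy is to rewrite the ``heads'' event via Corollary \ref{col::criteria}, pass to the $t\to\infty$ limit using the joint weak limit of $(\thetat,\psit\modtpi)$ supplied by Lemmas \ref{lem::limit distribution phi psi} and \ref{lem::limit distribution theta}, and then evaluate the resulting probability by conditioning on $\thetat$ and exploiting the uniformity of $\psit\modtpi$.

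First, Corollary \ref{col::criteria} identifies the head-up event at time $t$ with $\{\sinb\stheta\cpsi+\cosb\ctheta>0\}$. To pass to the limit I need the joint law of $(\thetat,\psit\modtpi)$. Lemma \ref{lem::limit distribution phi psi} gives $(\phit\modtpi,\psit\modtpi)\xrightarrow{\mathcal{D}}\mathcal{U}[0,2\pi]^2$, and Lemma \ref{lem::limit distribution theta} writes $\thetat=F(\cos 2\phit)$ for a continuous $F$. Applying the continuous mapping theorem to $(x,y)\mapsto(F(\cos 2x),y)$ yields $(\thetat,\psit\modtpi)\xrightarrow{\mathcal{D}}(\Theta,\Psi)$, where $\Theta$ and $\Psi$ are independent, $\Theta$ has density $f_\theta$ on $[\theta_m,\theta_M]$, and $\Psi\sim\mathcal{U}[0,2\pi]$. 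Since the boundary surface $\sinb\sin y\cos\psi+\cosb\cos y=0$ has Lebesgue measure zero under this product law, the portmanteau theorem gives
\begin{equation*}
p(\beta,\phio,\thetao)=\mathbb{P}\bigl(\sinb\sin\Theta\cos\Psi>-\cosb\cos\Theta\bigr).
\end{equation*}

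Conditioning on $\Theta=y$ and taking $\sinb>0$ without loss of generality, the inner probability reduces to $\mathbb{P}(\cos\Psi>-\cot\beta\cot y)$, which by uniformity of $\Psi$ equals $\tfrac{1}{\pi}\arccos(-\cot\beta\cot y)$ when $|\cot\beta\cot y|<1$ and is clipped to $\{0,1\}$ otherwise. Applying the identity $\arccos(-x)=\tfrac{\pi}{2}+\arcsin(x)$ rewrites this as $\tfrac{1}{2}+\tfrac{1}{\pi}\arcsin(\min\{1,\cot\beta\cot y\})$, with the $\min$ absorbing the upper clipping. Integrating against $f_\theta$ yields the stated integral formula. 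In the uniform case $\theta_m=\theta_M=\thetao$, so $f_\theta$ collapses to a point mass at $\thetao$ and the integral reduces to the claimed closed form. The main obstacle I anticipate is justifying the product structure of the limit: Lemma \ref{lem::limit distribution phi psi} only provides joint uniformity of $(\phit,\psit)\modtpi$, and independence of $\Theta$ and $\Psi$ in the limit must be deduced from the continuous functional relation $\thetat=F(\cos 2\phit)$ of Lemma \ref{lem::limit distribution theta} together with the continuous mapping theorem. Once this joint convergence is established, the remainder is a routine arccosine computation and a trigonometric rewrite.
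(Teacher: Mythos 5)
Your approach is essentially the same as the paper's: both rewrite the heads-up event via Corollary~\ref{col::criteria}, pass to the $t\to\infty$ limit using Lemmas~\ref{lem::limit distribution phi psi} and~\ref{lem::limit distribution theta}, and evaluate the resulting integral via the arccosine of $-\cot\beta\cot y$. One thing you do more carefully than the paper: you explicitly justify the product structure of the limiting law of $(\Theta,\Psi)$ by observing that $\thetat=F(\cos 2\phit)$ is a continuous functional of $\phit$ alone, and then invoking the continuous mapping theorem and the independence of coordinates in the joint uniform limit of $(\phit,\psit)\modtpi$. The paper simply writes the joint density as $f_\psi(x)f_\theta(y)$ without justification, so your argument fills a real gap.

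However, there is one case you do not handle. Lemmas~\ref{lem::limit distribution phi psi} and~\ref{lem::limit distribution theta} both carry the hypothesis that $\thetat<\frac{\pi}{2}$ for all $t$ or $\thetat>\frac{\pi}{2}$ for all $t$, i.e.\ that $(\phio,\thetao)$ lies \emph{outside} the fair region $S_F$ of (\ref{SF}). When $(\phio,\thetao)\in S_F$, the curve of $\vlb$ crosses the equator, $\theta_m$ and $\theta_M$ are supplementary, and those lemmas do not apply, so your limit argument does not directly give the claimed formula. The paper's proof addresses this separately by observing that in the fair region $p=\tfrac12$ by the symmetry $\vlb_z(t \bmod T)=-\vlb_z(-t\bmod T)$, and that formula (\ref{probability formula}) still evaluates to $\tfrac12$ because the integrand is odd about $y=\pi/2$ (equations (\ref{odd function equation})--(\ref{integral is 0})). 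You should add this case split to cover the full range of $(\phio,\thetao)$ claimed in the theorem.

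Finally, your arccosine computation produces $\tfrac12+\tfrac{1}{\pi}\int\arcsin(\min\{1,\cot\beta\cot y\})f_\theta(y)\,dy$, matching the paper's own chain of equalities and the uniform-case formula (\ref{probability formula uniform}), but not the $\tfrac{1}{\pi^2}$ coefficient in the displayed statement (\ref{probability formula}). That coefficient appears to be a typographical error in the theorem; your derivation catches it correctly, but you should not claim it matches the printed formula as stated.
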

\begin{proof}
Let us first consider the case where $\thetat<\frac{\pi}{2}$ for all $t$ or $\thetat>\frac{\pi}{2}$ for all $t$.
By Lemma \ref{lem::limit distribution theta}, the limiting pdf of $\thetat$ is $f_\theta(y)$, given in \ref{pdf of thetat}.By Lemma \ref{lem::limit distribution phi psi}, the limiting pdf of $\psit\modtpi$ is $f_\psi(x)=\frac{1}{2\pi}$.
Let us define 
\begin{align*}
    D=\{x\in(0,2\pi),y\in(\theta_m,\theta_M)|\cos(x)>-\cot(\beta)\cot(y)\},\\
    D_1=\{x\in(0,\pi),y\in(\theta_m,\theta_M)|\cos(x)>-\cot(\beta)\cot(y)\}.
\end{align*}
By Corollary \ref{col::criteria}, $D$ is the region where the coin is heads up. Thus the limiting probability of heads is given by
\begin{align*}
    p(\beta,\phio,\thetao)
    =&\iint_D f_\psi(x)f_\theta(y) dx dy\\
    =& 2\iint_{D_1} f_\psi(x)f_\theta(y) dx dy\\
    =&\frac{1}{\pi}\iint_{D_1} f_\theta(y) dx dy\\
    =&\frac{1}{\pi}\int_{\theta_m}^{\theta_M} \arccos(-\min\{1, \cot\beta\cot y\})f_\theta(y)dy\\
    =&\frac{1}{2}+\frac{1}{\pi}\int_{\theta_m}^{\theta_M}\arcsin(\min\{1, \cot\beta\cot y\})f_\theta(y)dy
\end{align*}
as desired. When $(\phio,\thetao)$ lies in the fair region, $\theta_m$ and $\theta_M$ are complementary and $p=\frac{1}{2}$. Notice that (\ref{probability formula}) still holds as 
\begin{equation}
    \int_{\theta_m}^{\theta_M}\arcsin(\min\{1, \cot\beta\cot y\})f_\theta(y)dy
    =0
    \label{integral is 0}
\end{equation}
since
\begin{equation}
    \arcsin(\min\{1, \cot\beta\cot y\})f_\theta(y)=-\arcsin(\min\{1, \cot\beta\cot (\pi-y)\})f_\theta(\pi-y).
    \label{odd function equation}
\end{equation}
So (\ref{probability formula}) holds for all situations of $\thetat$. In the special case when $\Ix=\Iy<\Iz$, $\theta_m=\theta_M$ and the integral is integrated at a direct delta distribution at $\thetao$, which gives us 
\begin{equation}
    p(\beta,\phio,\thetao)=p(\beta,\thetao)=\frac{1}{2}+\frac{1}{\pi}\arcsin(\min\{1, \cot\beta\cot \thetao\})
\end{equation}
as desired.
\end{proof}
We can also see from Equation (\ref{odd function equation}) that it is natural to assume the probability is 50\% in the fair case. Note that $\vlb$ traces out a curve symmetric along the x-y plane and as shown on the right of Figure \ref{angular momentum traj}. If the limiting distribution of $\vlb_z$ as $t\rightarrow\infty$ exists, then since $\vlb_z(t\mod T)=-\vlb_z(-t\mod T)$, the limiting distribution of $\vlb_z$ will be symmetric along the x-y plane. Thus by Equation (\ref{odd function equation}) and $\thetat=\arccos(\vlb_z)$, the integral in (\ref{integral is 0}) is 0, giving us a probability of heads of 50\%. 

Usually, coin flips tend to start with $\beta=\thetao$, i.e. face of the coin facing straight up. An immediate corollary to Theorem \ref{thm::probability heads} is
\begin{corollary}
\label{col::probability straight}
With the assumptions of Theorem \ref{thm::probability heads} and further assuming that heads is facing straight up at the initial position, the limiting probability of heads as $t\rightarrow\infty$ with $(\phio,\thetao)$ fixed, is given by
\begin{equation}
    \label{probability formula straight}
    p(\phio,\thetao)=\frac{1}{2}+\frac{1}{\pi^2}\int_{\theta_m(\phio,\thetao)}^{\theta_M(\phio,\thetao)}\arcsin(\min\{1, \cot\thetao\cot y\})f_\theta(y) dy,
\end{equation}
In the special case when $\Ix=\Iy<\Iz$(uniform coins), 
\begin{equation}
\label{probability formula uniform straight}
    p(\phio,\thetao)=p(\thetao)=\frac{1}{2}+\frac{1}{\pi}\arcsin(\min\{1, \cot^2\thetao\}).
\end{equation}
\end{corollary}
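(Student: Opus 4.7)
The plan is essentially substitution: Corollary \ref{col::probability straight} is the specialization of Theorem \ref{thm::probability heads} to the initial condition $\beta = \thetao$, so no new dynamical analysis or probability computation is needed beyond what the theorem has already established. The entire task is to verify that the substitution goes through cleanly in both the non-uniform and uniform formulas, and that the fair-case bookkeeping is untouched.

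First, for the general non-uniform case, I would substitute $\beta = \thetao$ directly into the integrand of (\ref{probability formula}). The factor $\min\{1, \cot\beta\cot y\}$ becomes $\min\{1, \cot\thetao \cot y\}$, while the integration limits $\theta_m(\phio,\thetao)$, $\theta_M(\phio, \thetao)$ and the weight $f_\theta(y)$ from Lemma \ref{lem::limit distribution theta} are independent of $\beta$. This recovers (\ref{probability formula straight}) verbatim.

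Next, for the uniform case $\Ix = \Iy < \Iz$, I would substitute $\beta = \thetao$ into (\ref{probability formula uniform}). The product $\cot\beta \cot\thetao$ collapses to $\cot^2\thetao$, producing (\ref{probability formula uniform straight}). The one mild point requiring attention is the behavior of $\min\{1, \cot^2\thetao\}$: it equals $1$ (so $\arcsin$ equals $\pi/2$ and $p = 1$) precisely when $\thetao \leq \pi/4$, and equals $\cot^2\thetao$ otherwise. Physically this reflects the intuitive fact that if heads starts facing nearly straight up and the angular momentum is close enough to aligned with $\vn$, the coin cannot swing past the horizontal.

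Main obstacle: there is essentially none, since the corollary is by construction an evaluation of Theorem \ref{thm::probability heads} at $\beta = \thetao$. The only small check worth articulating is that the fair-region conclusion $p = \tfrac{1}{2}$ from the proof of Theorem \ref{thm::probability heads} survives the substitution. But the fair region $S_F$ defined in (\ref{SF}) depends only on $(\phio, \thetao)$, and the symmetry identity (\ref{odd function equation}) still produces a vanishing integral with $\beta$ replaced by $\thetao$; hence (\ref{probability formula straight}) continues to evaluate to $\tfrac{1}{2}$ inside $S_F$. No further argument is required.
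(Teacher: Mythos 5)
Your proposal is correct and matches the paper, which presents this as an immediate corollary of Theorem \ref{thm::probability heads} with no separate proof: setting $\beta=\thetao$ in (\ref{probability formula}) and (\ref{probability formula uniform}) gives (\ref{probability formula straight}) and (\ref{probability formula uniform straight}) directly, since $\theta_m$, $\theta_M$, and $f_\theta$ do not depend on $\beta$. Your extra remarks on the $\min\{1,\cot^2\thetao\}$ threshold at $\thetao\le\pi/4$ and on the fair region surviving the substitution are accurate but not needed.
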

Formula (\ref{probability formula uniform}) for uniform coins is also shown in Theorem 2 by Diaconis et al.\cite{diaconis2007dynamical} Furthermore, if the flip is Keller flip ($\thetao=\frac{\pi}{2}$), then $p$ is just $\frac{1}{2}$. 

We can assume that a normal coin toss starts with heads facing straight up. So it remains for us to find the distribution of the initial parameters $(\phio,\thetao)$. We use the $\thetao$ values from the 27 real flip experiments by Diaconis et al\cite{diaconis2007dynamical} to be the empirical distribution of $\thetao$. And we will assume that $\phio$ is uniform distributed in $[0,2\pi)$. Then using the probability formula in Corollary \ref{col::probability straight}, we can calculate the probability of heads of a normal non-uniform coin toss with the assistance of the computer. Like Diaconis et al\cite{diaconis2007dynamical}, we use an American half dollar which has $\Ix=6.68g\cdot cm^2$, $\Iz=13.24g\cdot cm^2$ and assume that $\Iy=7.35g\cdot cm^2$. As a result, we obtain the probability of heads of a non-uniform coin $P=50.45\%$. This is closer to $50\%$ compared to the probability of the uniform coin calculated by Diaconis et al \cite{diaconis2007dynamical} which is $50.83\%$. This shows that non-uniform coins are fairer than uniform coins.

\section{Conclusions}
While Coin-tossing is often used to make a decision between two options, the tossed coins are usually not absolutely uniform in our daily life. In this work, we investigated the dynamic behavior of non-uniform coins whose inertia matrix is given by $\text{diag}(\Ix, \Iy, \Iz)$ where $\Ix<\Iy<\Iz$. These coins include homogeneous coins with axis symmetrical convex parts, such as ellipse, rectangular, oblong shapes, on the surface and symmetrical inhomogeneous coins. 

We expressed the status, heads or tails, in terms of the initial direction of the angular momentum, the precession and nutation of the normal vector. We provided calculation of the limiting probability of heads as $t\rightarrow\infty$, with fixed initial direction of the angular momentum and distribution on magnitude of the angular momentum. The results from Keller\cite{keller1986probability} and Diaconis\cite{diaconis2007dynamical} are special cases of our study. 

In Figure \ref{fair region}, the fair region of initial parameters $(\varphi_0,\theta_0)$ of non-uniform coin has positive area while the fair region of the uniform coins is only a line in $\mathbb{R}^2$. The area of the fair region for non-uniform coin depends on $\Ix,\Iy,\Iz$. So there are much more situations of initial conditions where the non-uniform coin is fair and the uniform in not fair. 
\begin{figure}[H]
\begin{center}
\includegraphics[width=5cm]{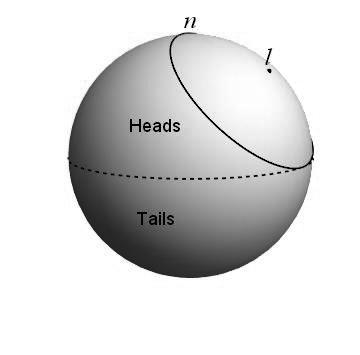}
\hspace{6pt}
\includegraphics[width=5cm]{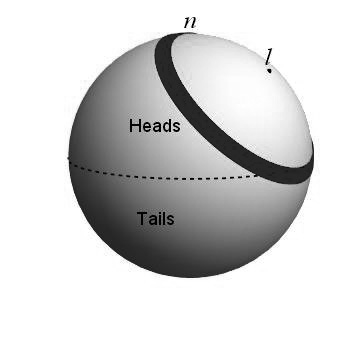}
\end{center}
\vspace*{6pt}
\caption{The path of $\vn$ for uniform (left) and non-uniform (right) coin when $\beta=\theta_0=\pi/4$.}
\label{compare_traj}
\end{figure}
In addition, Equation (\ref{probability formula uniform}) implies the probability of heads for uniform coin is 100\% if $\theta_0 \leq \frac{\pi}{4}$. Figure \ref{compare_traj} shows a situation ($\beta=\theta_0=\pi/4$) where the non-uniform coin is clearly fairer than the uniform coin. The two figures are possible regions of the normal vector with the same angular momentum for the uniform coin, non-uniform coin, respectively. Note that in left panel the possible region of unit vector of the uniform coin is inside the northern hemisphere. The coin never turns over and therefore the probability of heads is 100\%. But note that in the right panel, there is a small region inside the southern hemisphere due to nutation. Intuitively, we see there should be a small probability of the coin landing in tails. Corollary \ref{col::probability straight} proves that in this situation, the probability of heads is strictly less than 100\% for non-uniform coins.\\
\\
\section*{Acknowledgements} 
I would like to express my deepest thanks to Prof. Persi Diaconis from Stanford University for his guidance and mentorship on this research. His lectures on Mathematics and Statistics of Gambling greatly inspired me and gave me valuable insights on this topic.

\bibliography{mybibfile}

\end{document}